\documentclass{amsart}

\usepackage{amsmath, amssymb, amsthm, amscd}       
\newcommand{\Rational}{\mathbb{Q}}
\newcommand{\Real}{\mathbb{R}}

\newcommand{\Adele}{\mathbb{A}}
\newcommand{\GL}{\operatorname{GL}}
\newcommand{\List}[1]{\left\{ #1 \right\}}        
\newcommand{\set}[1]{\mathcal{#1}}                
\newcommand{\Set}[2]{\left\{ #1 \mathrel{\left|\vphantom{#1 \sum #2}\right.} #2 \right\}}          %
\newcommand{\dif}{\,\mathrm{d}}                   
\newcommand{\contra}[1]{\widetilde{#1}}

\theoremstyle{plain}                                                                               %
\newtheorem{theorem}{Theorem}[section]             
\newtheorem*{theorem*}{Theorem}                    
\newtheorem{corollary}[theorem]{Corollary}         
\newtheorem*{corollary*}{Corollary}                
\newtheorem{lemma}[theorem]{Lemma}                 
\newtheorem*{lemma*}{Lemma}                        

\theoremstyle{remark}                                                                              %
\newtheorem*{example*}{Example}                    
\newtheorem{remark}[theorem]{Remark}               
\newtheorem*{remark*}{Remark}                      

\begin{document}

\title[Lower Bounds]{Lower Bounds for Rankin-Selberg $L$-functions on the Edge of the Critical Strip}

\author{Qiao Zhang}
\address{Department of Mathematics \\
Texas Christian University \\
Fort Worth, TX 76129}
\email{q.zhang@tcu.edu}

\date{}

\begin{abstract}
  Let $F$ be a number field, and let $\pi_1$ and $\pi_2$ be distinct unitary cuspidal automorphic representations of $\GL_{n_1}(\Adele_F)$ and $\GL_{n_2}(\Adele_F)$ respectively. In this paper, we derive new lower bounds for the Rankin-Selberg $L$-function $L(s, \pi_1 \times \contra{\pi}_2)$ along the edge $\Re s = 1$ of the critical strip in the $t$-aspect. The corresponding zero-free region for $L(s, \pi_1 \times \contra{\pi}_2)$ is also determined.
\end{abstract}

\subjclass{Primary 11M26; Secondary 11F66}

\keywords{Rankin–Selberg $L$-function, Lower bound, Zero-free region}

\maketitle

\section{Introduction}

The value distributions of $L$-functions on the edge of the critical strip are important in number theory. In particular, the lower bounds for the $L$-values along the edge $\Re s = 1$ are closely related to the determination of zero-free regions of the corresponding $L$-function.

In this paper, we are interested in the lower bounds of Rankin-Selberg $L$-functions along the edge of the critical strip. More precisely, let $F / \Rational$ be a number field of degree $n_F$, and let $\pi_1$ and $\pi_2$ be unitary cuspidal automorphic representations of $\GL_{n_1}(\Adele_F)$ and $\GL_{n_2}(\Adele_F)$ respectively. Assume that the central characters of $\pi_1$ and $\pi_2$ are unitary and normalized so that they are trivial on the diaginally embedded copy of the positive reals. Note that, with this normalization, the Rankin-Selberg $L$-functions $L(s, \pi_1 \times \contra{\pi}_1)$ and $L(s, \pi_2 \times \contra{\pi}_2)$ both have simple poles at $s = 1$. The main subject of this paper is the lower bound for the Rankin-Selberg $L$-function $L(1 + it, \pi_1 \times \contra{\pi}_2)$, which in turn yields a new zero-free region for $L(s, \pi_1 \times \contra{\pi}_2)$.

In 1980, Shahidi~\cite{Shahidi1980} showed that $L(1 + it, \pi_1 \times \contra{\pi}_2) \neq 0$ for every $t \in \Real$. Since then, there has been substantial progress on the determination of lower bounds and zero-free regions for these Rankin Selberg $L$-functions, especially in the following two scenarios. On the one hand, if either $\pi_1$ or $\pi_2$ is self-dual, then it is known, by the work of Moreno~\cite{Moreno1985} in 1985 and Sarnak~\cite{Sarnak2004} in 2004, that $L(s, \pi_1 \times \contra{\pi}_2)$ has the classical zero-free region of the de la Vall\'{e}e-Poussin type, namely
\[
  \sigma > 1 - \frac{c}{\log\Big(\mathfrak{c}(\pi_1) \ \mathfrak{c}(\pi_2) (|t| + 2)\Big)}
\]
where $\mathfrak{c}(\pi_1)$ and $\mathfrak{c}(\pi_2)$ are analytic conductors of $\pi_1$ and $\pi_2$ respectively. On the other hand, the special case with $\pi_1 = \pi_2$ has also attracted much attention. In 2018, Goldfeld and Li~\cite{GoldfeldLi2018} obtained the lower bound
\[
  L(1 + it, \pi \times \contra{\pi}) \gg \frac{1}{(\log (|t| + 2))^3}
\]
where $\pi$ is an irreducible cuspidal unramified representation of $\GL_n(\Adele_{\Rational})$ with $n \geq 2$, tempered almost everywhere. This lower bound immediately leads to the zero-free region for $L(s, \pi \times \contra{\pi})$
\[
  \sigma > 1 - \frac{c_{\pi}}{(\log (|t| + 2))^5}
  \qquad
  |t| \geq 1
\]
These results were improved by Humphries and Brumley~\cite{HumphriesBrumley2019} in 2019 and generalized to an arbitrary number field $F$, with the lower bound
\[
  L(1 + it, \pi \times \contra{\pi}) \gg_{\pi} \frac{1}{\log (|t| + 2)}
\]
and the corresponding zero-free region for $L(s, \pi \times \contra{\pi})$
\begin{equation}\label{eq:Pi-contraPi-ZeroFree-HumphriesBrumley}
  \sigma > 1 - \frac{c_{\pi}}{\log (|t| + 2)}
  \qquad
  |t| \geq 1
\end{equation}
Recently in 2022, Humphries and Thorner~\cite{HumphriesThorner2022} made explicit the dependence of the constant $c_{\pi}$ upon $\pi$, and gave a more precise form for~\eqref{eq:Pi-contraPi-ZeroFree-HumphriesBrumley} given by
\[
  \sigma > 1 - \frac{c}{\log\Big(\mathfrak{c}(\pi)^n (|t| + e)^{n^2n_F}\Big)}
\]
for some effective absolute constant $c > 0$.

The study of general Rankin-Selberg $L$-functions without the self-duality assumption is much more difficult, and our knowledge is rather limited. A lower bound
\[
  L(1 + it, \pi_1 \times \contra{\pi}_2)
  \gg \frac{1}
           {\Big(\mathfrak{c}(\pi_1) \ \mathfrak{c}(\pi_2) (|t| + 2)\Big)^{N_{\pi_1,\pi_2}}}
\]
and the corresponding zero-free region
\[
  \sigma > 1 - \frac{c}
                    {\Big(\mathfrak{c}(\pi_1) \ \mathfrak{c}(\pi_2) (|t|+2)\Big)^{N_{\pi_1,\pi_2}}}
\]
were obtained in 2006 by Brumley~\cite{Brumley2006} in connection with the strong multiplicity one theorem, and by Gelbert and Lapid~\cite{GelbertLapid2006} for a broad family of $L$-functions. In 2013, Brumley~\cite{Brumley2013Appendix} gave an explicit value for $N_{\pi_1,\pi_2}$. More precisely, if $\pi_1 \neq \pi_2$, then he showed in~\cite[Theorem A.1]{Brumley2013Appendix} that
\begin{equation}\label{eq:Brumley2013Appendix-Original}
  L(1 + it, \pi_1 \times \contra{\pi}_2)
  \gg_{F,n_1,n_2,\varepsilon}
    \mathfrak{c}(\Pi_t \times \contra{\Pi}_t)^{-\frac{1}{2}(1-\frac{1}{n_1+n_2})-\varepsilon}
\end{equation}
where (see~\eqref{eq:Conductor-Factorization})
\[
  \mathfrak{c}(\Pi_t \times \contra{\Pi}_t)
  = \mathfrak{c}(\pi_1 \times \contra{\pi}_1) \
    \mathfrak{c}(\pi_2 \times \contra{\pi}_2) \
    \mathfrak{c}(\pi_1 \times \contra{\pi}_2, 1 + it) \
    \mathfrak{c}(\contra{\pi}_1 \times \pi_2, 1 - it)
\]
In particular, by the estimate~\eqref{eq:Bound-Conductor-Pi_t} 
\[
  \mathfrak{c}(\Pi_t \times \contra{\Pi}_t)
  \leq \Big(\mathfrak{c}(\pi_1) \ \mathfrak{c}(\pi_2)\Big)^{2n_1+2n_2}
       (|t| + 2)^{2n_Fn_1n_2}
\]
the lower bound~\eqref{eq:Brumley2013Appendix-Original} can be re-written as
\begin{equation}\label{eq:Brumley2013Appendix-Rewritten}
  L(1 + it, \pi_1 \times \contra{\pi}_2)
  \gg \Big(\mathfrak{c}(\pi_1) \ \mathfrak{c}(\pi_2)\Big)^{-(n_1+n_2-1)-\varepsilon}
      (|t| + 2)^{-n_Fn_1n_2(1-\frac{1}{n_1+n_2})-\varepsilon}
\end{equation}
Further, Brumley~\cite{Brumley2013Appendix} also derived the corresponding zero-free regions for $L(s, \pi_1 \times \contra{\pi}_2)$; in fact, it is shown that these lower bounds still hold in these zero-free regions.

In this paper, we seek to improve upon the above results of Brumley in the $t$-aspect, in particular the lower bound~\eqref{eq:Brumley2013Appendix-Rewritten}, and obtain the following theorem.

\begin{theorem}
  Let $F / \Rational$ be a number field of degree $n_F$, and let $\pi_1$ and $\pi_2$ be unitary cuspidal automorphic representations of $\GL_{n_1}(\Adele_F)$ and $\GL_{n_2}(\Adele_F)$ respectively. Assume that $\pi_1 \neq \pi_2$, and that the central characters of $\pi_1$ and $\pi_2$ are unitary and normalized so that they are trivial on the diagonally embedded copy of the positive reals. Then we have
  \[
    L(\sigma + it, \pi_1 \times \contra{\pi}_2)
    \gg \Big(\mathfrak{c}(\pi_1) \ \mathfrak{c}(\pi_2)\Big)^{-(n_1+n_2-1)-\varepsilon}
        (|t| + 2)^{-\frac{n_Fn_1n_2}{2}\left(1-\frac{1}{n_1+n_2}\right)-\varepsilon}
  \]
  where the $\gg$-constant depends upon $F$, $n_1$, $n_2$ and $\varepsilon$ only.
\end{theorem}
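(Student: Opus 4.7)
The strategy is to follow the framework underlying Brumley's bound~\eqref{eq:Brumley2013Appendix-Original} but to be more economical with the $t$-dependence of the archimedean conductor. First I would introduce the isobaric sum $\Pi_t = \pi_1 \boxplus (\pi_2 \otimes |\det|^{it})$ on $\GL_{n_1+n_2}(\Adele_F)$; its Rankin-Selberg square factors as
\[
  L(s, \Pi_t \times \contra{\Pi}_t) = L(s, \pi_1 \times \contra{\pi}_1)\,L(s, \pi_2 \times \contra{\pi}_2)\,L(s+it, \pi_1 \times \contra{\pi}_2)\,L(s-it, \contra{\pi}_1 \times \pi_2).
\]
Because $\pi_1 \neq \pi_2$, the two cross factors are entire, so $L(s, \Pi_t \times \contra{\Pi}_t)$ is meromorphic with a double pole at $s = 1$ whose leading Laurent coefficient is an explicit positive multiple of $|L(1+it, \pi_1 \times \contra{\pi}_2)|^2$.

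The next step is to use the Hoffstein-Ramakrishnan positivity of the Dirichlet coefficients of $\log L(s, \Pi_t \times \contra{\Pi}_t)$ to control this $L$-function from below in the Euler-product region, and to combine this with the Jacquet-Shalika convexity bound on the critical line via a Hadamard three-lines / Phragm\'en-Lindel\"of argument. The double pole at $s = 1$ transfers the weight to the $|L(1+it, \pi_1 \times \contra{\pi}_2)|^2$ factor on the right; optimizing the contour parameter and taking a square root then recovers Brumley's exponent $-\tfrac{1}{2}(1 - 1/(n_1+n_2))-\varepsilon$ on $\mathfrak{c}(\Pi_t \times \contra{\Pi}_t)$, which on its own only reproduces~\eqref{eq:Brumley2013Appendix-Rewritten}.

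The new ingredient is the handling of the $t$-aspect at the point where $\mathfrak{c}(\Pi_t\times\contra{\Pi}_t)$ is spelled out. The estimate~\eqref{eq:Bound-Conductor-Pi_t} double-counts the archimedean $t$-contribution — once from $\mathfrak{c}(\pi_1 \times \contra{\pi}_2, 1+it)$ and once from $\mathfrak{c}(\contra{\pi}_1 \times \pi_2, 1-it)$ — and these two factors are complex conjugates of each other at the archimedean places, related non-trivially by the Jacquet-Langlands-Shalika functional equation. The plan is to apply the three-lines argument asymmetrically, using convexity on only one of the two shifted factors and invoking the functional equation for the other, so that the $(|t|+2)^{n_F n_1 n_2}$ archimedean $t$-factor enters just once; taking the square root at the final step then produces the halved exponent $\tfrac{1}{2}n_F n_1 n_2(1 - 1/(n_1+n_2))$ in the $t$-aspect, with the conductors $\mathfrak{c}(\pi_1)$ and $\mathfrak{c}(\pi_2)$ still counted as before.

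The main obstacle I foresee is reconciling this asymmetric treatment with the positivity that supplies the lower bound in the first place: both the Euler-product inequality and the Laurent residue at $s=1$ require all four factors of $L(s, \Pi_t \times \contra{\Pi}_t)$ in their original configuration, so one cannot simply discard the shifted dual factor. The resolution should come from a careful choice of smooth test function and contour in the Mellin-Barnes integral underlying the three-lines step, together with explicit control of the archimedean $\Gamma$-factors of $\pi_1 \times \contra{\pi}_2$, so that the would-be ``extra'' $t$-dependence is absorbed into the $t$-independent diagonal factors $L(s, \pi_i \times \contra{\pi}_i)$ rather than being charged twice in the final bound. Once this is done, extending the lower bound from the line $\sigma=1$ into the advertised strip is routine, via the Hadamard-product / logarithmic derivative argument already used by Brumley.
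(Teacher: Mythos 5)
Your proposal is a genuinely different route from the paper's, and I do not believe it closes the gap you yourself flag at the end. The premise that the bound~\eqref{eq:Bound-Conductor-Pi_t} ``double-counts'' the archimedean $t$-dependence is not quite right: the two shifted factors $\mathfrak{c}(\pi_1\times\contra{\pi}_2,1+it)$ and $\mathfrak{c}(\contra{\pi}_1\times\pi_2,1-it)$ are complex-conjugate but genuinely distinct constituents of $\mathfrak{c}(\Pi_t\times\contra{\Pi}_t)$, and that conductor really does grow like $(|t|+2)^{2n_Fn_1n_2}$. Brumley's lower bound in terms of $\mathfrak{c}(\Pi_t\times\contra{\Pi}_t)^{-\frac12(1-\frac1{n_1+n_2})-\varepsilon}$ rests entirely on the nonnegativity of the Dirichlet coefficients of $-L'/L(s,\Pi_t\times\contra{\Pi}_t)$, which requires all four factors in place; the moment you try to invoke the functional equation to ``drop'' one shifted factor from the convexity step, you no longer have a nonnegative object and the Phragm\'en--Lindel\"of lower-bound machinery collapses. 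You correctly identify this obstacle, but the proposed resolution (``careful choice of smooth test function and contour'' so the extra $t$-dependence gets absorbed into the diagonal factors) is not a concrete mechanism, and I don't see how it can work: the diagonal factors $L(s,\pi_i\times\contra{\pi}_i)$ are $t$-independent, so there is nowhere for that archimedean $t$-mass to go.

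The paper does not touch the conductor estimate at all — it uses~\eqref{eq:Bound-Conductor-Pi_t} verbatim and reuses Brumley's lower bound $|r_{-1}|+|r_{-2}|\gg (|t|+2)^{-2\theta-\varepsilon}$ (Lemma~\ref{lem:Bound-R-Lower}) as a black box. The actual source of the factor-of-two improvement in the $t$-exponent is the observation (Lemma~\ref{lem:Bound-R-Upper}) that the coefficient $r_{-1}$ of the double-pole Laurent expansion contains the term $A_{-1}B_{-1}\,\frac{\dif}{\dif t}\bigl(|L(1+it,\pi_1\times\contra{\pi}_2)|^2\bigr)$, so that $|r_{-1}|+|r_{-2}|\leq 3C_1^2 g(t)+C_1^2|g'(t)|$ with $g(t)=|L(1+it,\pi_1\times\contra{\pi}_2)|^2$. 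Combining the two lemmas yields a differential inequality $|g'(t)|\gg (t+2)^{-2\theta-\varepsilon}-3g(t)$; if $g(t)$ ever dropped below $C_\varepsilon^2(t+2)^{-2\theta-2\varepsilon}$, integrating this inequality to the right forces $g$ to decrease too fast to remain nonnegative on an infinite interval. This ODE/integration contradiction is what converts the exponent $-2\theta$ on $|r_{-1}|+|r_{-2}|$ into the exponent $-\theta$ on $|L(1+it,\pi_1\times\contra{\pi}_2)|$, entirely bypassing the archimedean-conductor bookkeeping that your plan focuses on. To salvage your approach you would need an actual improvement of the Brumley/Gelbart--Lapid machinery in the $t$-aspect, which is a much harder problem than the derivative trick the paper actually employs.
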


\begin{remark}
  Brumley~\cite{Brumley2013Appendix} used the notation $L(s, \pi_1 \times \contra{\pi}_2)$ to denote the complete Rankin-Selberg $L$-function. Hence our (finite) $L$-function $L(s, \pi_1 \times \contra{\pi}_2)$ is what he denoted by $L^{\infty}(s, \pi_1 \times \contra{\pi}_2)$.
\end{remark}

\begin{remark}
  The work of Brumley~\cite{Brumley2013Appendix} also considered the case $\pi_1 = \pi_2$, and similar results were obtained for the lower bounds and zero-free regions. As it is clear from Section 3, our arguments can also be applied to derive the corresponding improvements of these results in the $t$-aspect. However, these results have now been superseded by the above mentioned works of Goldfeld and Li~\cite{GoldfeldLi2018}, of Humphries and Brumley~\cite{HumphriesBrumley2019}, and of Humphries and Thorne~\cite{HumphriesThorner2022}. Hence we skip the relevant discussions in this paper.
\end{remark}

Applying the convexity bound of Li~\cite{Li2010}, we can also derive from the above lower bound the corresponding zero-free region. Since the arguments are essentially the same as the first part of the proof to~\cite[Theorem A.1]{Brumley2013Appendix}, we omit the details here and only present our result in the following corollary.

\begin{corollary}
  Under the assumptions of Theorem 1, for every $\varepsilon > 0$ there exists a constant $c = c(F, n_1, n_2, \varepsilon) > 0$ such that
  \[
    L(s, \pi_1 \times \contra{\pi}_2)
    \gg_{F,n_1,n_2,\varepsilon}
      \Big(\mathfrak{c}(\pi_1) \ \mathfrak{c}(\pi_2)\Big)^{-(n_1+n_2-1)-\varepsilon}
      (|t| + 2)^{-\frac{d n_1 n_2}{2} \left(1 - \frac{1}{n_1 + n_2}\right)-\varepsilon}
  \]
  in the region
  \[
    \Set{s = \sigma + it}
        {\sigma \geq 1 - \frac{c}
                              {\Big(\mathfrak{c}(\pi_1) \ \mathfrak{c}(\pi_2)\Big)^{(n_1+n_2-1)+\varepsilon}
                               (|t| + 2)^{\frac{dn_1n_2}{2}\left(1-\frac{1}{n_1+n_2}\right)+\varepsilon}}}
  \]
\end{corollary}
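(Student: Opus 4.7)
My plan is to deduce both assertions of the corollary from Theorem~1 by the standard two-step route: combining the lower bound on the line $\Re s = 1$ with the polynomial convexity bound of Li~\cite{Li2010} to rule out zeros close to the line, and then propagating the lower bound inward by a logarithmic-derivative estimate. A key ingredient is the hypothesis $\pi_1 \neq \pi_2$, which guarantees that $L(s, \pi_1 \times \contra{\pi}_2)$ is entire and admits a Hadamard factorization over its zeros, each of which contributes positively to $\Re(L'/L)$.

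For the zero-free region, I would fix $t_0 \in \Real$, set $s_0 = 1 + it_0$, and apply Theorem~1 to obtain a lower bound $|L(s_0, \pi_1 \times \contra{\pi}_2)| \geq m$ with $m$ decaying polynomially in $\mathfrak{c}(\pi_1)\,\mathfrak{c}(\pi_2)$ and $|t_0|+2$ with precisely the exponents appearing in the corollary. On the other hand, Li's convexity bound provides an upper bound $|L(s, \pi_1 \times \contra{\pi}_2)| \leq M$ on a disk of fixed radius $R = O(1)$ centered at $s_0$, with $M$ again polynomial in the same quantities but with constants $C = C(n_1, n_2)$ times larger exponents. Jensen's formula (equivalently, the Borel--Carath\'{e}odory lemma applied to $\log L(s, \pi_1 \times \contra{\pi}_2)$ on a slightly smaller disk) then forces every zero $\rho$ with $|\rho - s_0| \leq R/2$ to satisfy $|\rho - s_0| \gtrsim R \cdot m/M$. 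Plugging in the explicit $m$ and $M$ yields the region claimed in the corollary.

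To obtain the lower bound throughout this region, I would next write, for $s = \sigma + it$ inside it,
\[
  \log |L(s, \pi_1 \times \contra{\pi}_2)|
  = \log |L(1 + it, \pi_1 \times \contra{\pi}_2)|
    - \Re \int_{\sigma}^{1} \frac{L'}{L}(u + it, \pi_1 \times \contra{\pi}_2) \dif u.
\]
Since the disk of radius roughly $2(1-\sigma)$ around $1 + it$ is zero-free by the first step, the Borel--Carath\'{e}odory lemma bounds $|L'/L|$ there by $O(\log M)$; integrating over a segment of length at most $1 - \sigma$ costs only an $\varepsilon$-loss in the exponents, while Theorem~1 supplies the main term. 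Combining these gives the claimed lower bound throughout the region.

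The main obstacle will be book-keeping rather than ideas: one must balance the radii and parameters in the Jensen and Borel--Carath\'{e}odory steps so that the exponents produced at the end match exactly those in the corollary. This is the calculation carried out in the first part of \cite[Theorem~A.1]{Brumley2013Appendix}, with no modification needed beyond substituting the improved lower bound of Theorem~1 for~\eqref{eq:Brumley2013Appendix-Rewritten}, which is why the author omits the details here.
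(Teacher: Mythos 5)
Your proposal is correct and follows the same route the paper invokes: substitute the improved lower bound on $\Re s = 1$ from Theorem~1 into the Jensen / Borel--Carath\'{e}odory argument of the first part of the proof of \cite[Theorem~A.1]{Brumley2013Appendix}, using Li's convexity bound for the upper bound, first to extract the zero-free region and then to propagate the lower bound inward via the logarithmic derivative. This is precisely what the paper states it is doing when it omits the details, so no further comparison is needed.
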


\vskip 2ex

\subsection*{Acknowledgements}

The author would like to thank Dorian Goldfeld for stimulating this research, and for many insightful discussions.

\section{Preliminary}

Let $F$ be a number field of degree $n_F$, and let $\pi_1$ and $\pi_2$ be unitary cuspidal automorphic representations of $\GL_{n_1}(\Adele_F)$ and $\GL_{n_2}(\Adele_F)$ respectively. Assume that the central characters of $\pi_1$ and $\pi_2$ are unitary and normalized so that they are trivial on the diagonally embedded copy of the positive reals.

Since the Rankin-Selberg $L$-functions $L(s, \pi_1 \times \contra{\pi}_1)$ and $L(s, \pi_2 \times \contra{\pi}_2)$ both have a simple pole at $s = 1$, we may write their Laurent expansions at $s = 1$ as
\[
  L(s, \pi_1 \times \contra{\pi}_1) = \sum_{k=-1}^{\infty} A_k (s - 1)^k
\]
\[
  L(s, \pi_2 \times \contra{\pi}_2) = \sum_{k=-1}^{\infty} B_k (s - 1)^k
\]

\begin{lemma}\label{lem:Bound-AB}
  For every $\varepsilon > 0$ there exists a constant $C_1' = C_1'(\varepsilon) > 0$ such that
  \[
    |A_{-1}| \leq C_1' \mathfrak{c}(\pi_1)^{\varepsilon}
    \qquad
    |A_0|    \leq C_1' \mathfrak{c}(\pi_1)^{\varepsilon}
  \]
  \[
    |B_{-1}| \leq C_1' \mathfrak{c}(\pi_2)^{\varepsilon}
    \qquad
    |B_0|    \leq C_1' \mathfrak{c}(\pi_2)^{\varepsilon}
  \]
  For simplicity, write
  \begin{equation}\label{eq:Constant-C1}
    C_1 = C_1' \mathfrak{c}(\pi_1)^{\varepsilon} \mathfrak{c}(\pi_2)^{\varepsilon}
  \end{equation}
  then we have
  \[
    |A_{-1}| \leq C_1
    \qquad
    |A_0|    \leq C_1
    \qquad
    |B_{-1}| \leq C_1
    \qquad
    |B_0|    \leq C_1
  \]
\end{lemma}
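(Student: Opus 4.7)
The plan is to treat the residues $A_{-1}, B_{-1}$ and the constant terms $A_0, B_0$ separately; by the symmetry between $\pi_1$ and $\pi_2$, I concentrate on $A_{-1}$ and $A_0$. The bound on the residue $|A_{-1}| = |\operatorname{Res}_{s=1} L(s, \pi_1 \times \contra{\pi}_1)| \ll_\varepsilon \mathfrak{c}(\pi_1)^\varepsilon$ is now classical. I would either quote the relevant result of Brumley~\cite{Brumley2006} (or the analogous estimate of Li~\cite{Li2010}), or reprove it along the standard route: use the nonnegativity of the Dirichlet coefficients of $-L'/L(s, \pi_1 \times \contra{\pi}_1)$ (a consequence of self-duality), together with absolute convergence of the Dirichlet series at $s = 1 + 1/\log \mathfrak{c}(\pi_1)$ and the de la Vall\'ee-Poussin zero-free region that is available in the self-dual case.

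For $A_0$, I would introduce the function
\[
  h(s) = L(s, \pi_1 \times \contra{\pi}_1) - \frac{A_{-1}}{s-1},
\]
which extends holomorphically across $s = 1$ with $h(1) = A_0$. The maximum modulus principle on a small disk $|s - 1| \leq r$ then gives $|A_0| \leq \max_{|s-1|=r} |h(s)|$. Choosing $r = c / \log \mathfrak{c}(\pi_1)$ for a small absolute constant $c > 0$, the second term of $h$ on the boundary circle is bounded by $|A_{-1}|/r \ll_\varepsilon \mathfrak{c}(\pi_1)^{\varepsilon} \log \mathfrak{c}(\pi_1)$ from the residue estimate. For the first term, I would bound $L(s, \pi_1 \times \contra{\pi}_1)$ on $|s-1| = r$ by $\mathfrak{c}(\pi_1)^\varepsilon / |s - 1|$ via the simple pole combined with the log-free estimate $\log L(s, \pi_1 \times \contra{\pi}_1) = -\log(s-1) + O_\varepsilon(\log \mathfrak{c}(\pi_1))$ valid inside the zero-free region. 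Putting this together yields $|A_0| \ll_\varepsilon \mathfrak{c}(\pi_1)^{\varepsilon} \log \mathfrak{c}(\pi_1)$, and absorbing the logarithm into a slightly enlarged $\varepsilon$ produces the required estimate. The bounds on $B_{-1}$ and $B_0$ follow by applying the same argument to $\pi_2$ in place of $\pi_1$, and the combined constant $C_1$ in~\eqref{eq:Constant-C1} then absorbs the two $\varepsilon$-powers.

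The main obstacle I anticipate is the quantitative control of $L(s, \pi_1 \times \contra{\pi}_1)$ on the small circle around $s = 1$: convexity in the critical strip is far too weak to give a $\mathfrak{c}(\pi_1)^\varepsilon$-shaped bound at $\Re s$ close to $1$. The real input is the log-derivative estimate inside the de la Vall\'ee-Poussin zero-free region, which itself rests on Hadamard factorization of the entire function $(s-1) L(s, \pi_1 \times \contra{\pi}_1)$ together with the zero-density information available in the self-dual case. Once this input is secured, the Cauchy / maximum-modulus step is routine bookkeeping.
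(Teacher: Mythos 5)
The paper disposes of this lemma with a one-line citation to Li's edge-of-the-critical-strip bounds~\cite{Li2010} (see also~\cite{Brumley2013Appendix}). Your overall structure---bound the residue $A_{-1}$ directly, then extract $A_0$ via Cauchy's formula on a small circle around $s=1$---is a reasonable way to unpack that citation, and your treatment of $A_{-1}$ and the reduction of $A_0$ to a pointwise bound on $L(s,\pi_1\times\contra{\pi}_1)$ near $s=1$ are both fine.

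However, your final paragraph misidentifies the key input and, as stated, leaves a genuine gap. You claim that ``convexity in the critical strip is far too weak'' and that the real input is ``the log-derivative estimate inside the de la Vall\'ee-Poussin zero-free region.'' This is backwards. The standard Borel--Carath\'eodory/Hadamard argument inside a classical zero-free region gives $\log L(s,\pi_1\times\contra{\pi}_1)+\log(s-1) = O(\log\mathfrak{c}(\pi_1))$ with an \emph{absolute} implied constant; exponentiating yields only $|L(s,\pi_1\times\contra{\pi}_1)|\ll \mathfrak{c}(\pi_1)^{C}/|s-1|$ for some fixed $C>0$, not a bound of the form $\mathfrak{c}(\pi_1)^{\varepsilon}/|s-1|$. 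Your notation $O_\varepsilon(\log\mathfrak{c}(\pi_1))$ suggests you believe the constant can be made small, but that is precisely what this route does \emph{not} deliver. What does deliver it is the convexity (Phragm\'en--Lindel\"of) bound evaluated at the edge: for $\Re s = \sigma$ with $1-\sigma \ll 1/\log\mathfrak{c}(\pi_1)$, the convexity exponent $(1-\sigma)/2$ contributes $\mathfrak{c}(\pi_1)^{O(1/\log\mathfrak{c}(\pi_1))}=O(1)$, so the bound is genuinely $\mathfrak{c}(\pi_1)^{\varepsilon}$. Making this rigorous despite the pole at $s=1$ and the absence of the Ramanujan bound is exactly the content of Li's theorem~\cite{Li2010} that the paper invokes, so the correct move is to cite her result as the input to your Cauchy/maximum-modulus step rather than the zero-free-region log-derivative estimate, which does not suffice on its own.
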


\begin{proof}
  The lemma follows from Li's convexity bounds~\cite{Li2010}; see also~\cite{Brumley2013Appendix}.
\end{proof}

Further, for every $t \in \Real$ consider the isobaric representation 
\[
  \Pi_t = (\pi_1 \otimes |\det|^{it/2}) \boxplus (\pi_2 \otimes |\det|^{-it/2})
\]
then
we have the factorization of $L$-functions
\begin{equation}\label{eq:Lfunction-Factorization}
  L(s, \Pi_t \times \contra{\Pi}_t)
  = L(s, \pi_1 \times \contra{\pi}_1) \
    L(s, \pi_2 \times \contra{\pi}_2) \
    L(s + it, \pi_1 \times \contra{\pi}_2) \
    L(s - it, \contra{\pi}_1 \times \pi_2)
\end{equation}
and so
the corresponding factorization of analytic conductors
\begin{equation}\label{eq:Conductor-Factorization}
  \mathfrak{c}(\Pi_t \times \contra{\Pi}_t)
  = \mathfrak{c}(\pi_1 \times \contra{\pi}_1) \
    \mathfrak{c}(\pi_2 \times \contra{\pi}_2) \
    \mathfrak{c}(\pi_1 \times \contra{\pi}_2, 1 + it) \
    \mathfrak{c}(\contra{\pi}_1 \times \pi_2, 1 - it)
\end{equation}
In particular, by the bound~\cite[(8)]{Brumley2006} we have
\begin{equation}\label{eq:Bound-Conductor-Pi_t}
  \mathfrak{c}(\Pi_t \times \contra{\Pi}_t)
  \leq \Big(\mathfrak{c}(\pi_1) \ \mathfrak{c}(\pi_2)\Big)^{2n_1+2n_2}
       (|t| + 2)^{2n_Fn_1n_2}
\end{equation}

By the factorization~\eqref{eq:Lfunction-Factorization}, since $\pi_1$ and $\pi_2$ are assumed distinct, the Rankin-Selberg $L$-function $L(s, \Pi_t \times \contra{\Pi}_t)$ has a double pole at $s = 1$ and is holomorphic everywhere else. Accordingly, we write the Laurent series representation of $L(s, \Pi_t \times \contra{\Pi}_t)$ at $s = 1$ as
\[
  L(s, \Pi_t \times \contra{\Pi}_t) = \sum_{k=-2}^{\infty} r_k \ (s - 1)^k
\]

\begin{lemma}\label{lem:Bound-R-Lower}
  For every $\varepsilon > 0$, there exists a constant $C_2' = C_2'(\varepsilon) > 0$ such that
  \begin{align*}
    |r_{-1}| + |r_{-2}|
    \geq& C_2' \ \mathfrak{c}(\Pi_t \times \contra{\Pi}_t)^{-\frac{1}{2}(1-\frac{1}{n_1+n_2})-\varepsilon} \\
    \geq& C_2' \
          \Big(\mathfrak{c}(\pi_1) \ \mathfrak{c}(\pi_2)\Big)^{-(n_1+n_2-1)-\varepsilon}
          (|t| + 2)^{-n_Fn_1n_2(1-\frac{1}{n_1+n_2})-\varepsilon}
  \end{align*}
  For simplicity, write
  \begin{equation}\label{eq:Constant-C2}
    C_2 = C_2' \ \Big(\mathfrak{c}(\pi_1) \ \mathfrak{c}(\pi_2)\Big)^{-(n_1+n_2-1)-\varepsilon}
  \end{equation}
  then we have
  \[
    |r_{-1}| + |r_{-2}|
    \geq C_2 \ (|t| + 2)^{-n_Fn_1n_2(1-\frac{1}{n_1+n_2})-\varepsilon}
  \]
\end{lemma}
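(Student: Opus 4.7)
My plan is to adapt the argument of Brumley's Theorem A.1~\cite{Brumley2013Appendix}, which in its essence already operates at the level of Laurent coefficients of $L(s, \Pi_t \times \contra{\Pi}_t)$. The key structural input is that $\Pi_t$ is an isobaric automorphic representation of $\GL_{n_1+n_2}(\Adele_F)$, so that $L(s, \Pi_t \times \contra{\Pi}_t)$ is a Rankin-Selberg $L$-function of the form $L(s, \Sigma \times \contra{\Sigma})$; its Dirichlet series $\sum_n a_n n^{-s}$ therefore has non-negative coefficients with $a_1 = 1$, and by~\eqref{eq:Lfunction-Factorization} it is holomorphic in a neighborhood of $s = 1$ apart from the double pole with leading Laurent data $(r_{-2}, r_{-1})$.

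I would first combine this positivity with a Mellin-Perron identity. For a suitable non-negative smooth test function $\phi$ with $\phi(1) > 0$, shifting the contour in
\[
  \sum_n a_n \phi(n/X) = \frac{1}{2\pi i} \int_{(c)} L(s, \Pi_t \times \contra{\Pi}_t) \hat\phi(s) X^s \, ds
\]
from $\operatorname{Re}(s) = c > 1$ leftward past the double pole at $s = 1$ produces a residue contribution
\[
  \bigl(r_{-2} \hat\phi'(1) + r_{-1} \hat\phi(1)\bigr) X + r_{-2} \hat\phi(1) X \log X,
\]
while positivity yields $\sum_n a_n \phi(n/X) \geq a_1 \phi(1/X) \gg_\phi 1$ for $X$ in a suitable range. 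The remaining contour integral on $\operatorname{Re}(s) = \sigma_0 \leq 0$ is controlled using the functional equation together with the convexity bound of Li~\cite{Li2010} for Rankin-Selberg $L$-functions, producing an upper bound essentially of the form $X^{\sigma_0}\mathfrak{c}(\Pi_t \times \contra{\Pi}_t)^{(1-\sigma_0)/2 + \varepsilon}$. Balancing the two sides and optimizing in $X$ and $\sigma_0$ then produces a lower bound on $|r_{-1}| + |r_{-2}|$ of the required shape.

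The main difficulty is extracting the precise exponent $-\frac{1}{2}\bigl(1 - \frac{1}{n_1+n_2}\bigr)$ rather than the cruder $-\frac{1}{2}$ that comes out of a naive optimization. The saving $\frac{1}{n_1+n_2}$ reflects the fact that $\Pi_t$ has degree $n_1 + n_2$, and following Brumley it is obtained through a more refined use of the positivity at the pole, tailored to the specific convexity bound for Rankin-Selberg $L$-functions of isobaric representations on number fields. Once the first inequality is established, the second inequality in the statement follows at once from the uniform bound~\eqref{eq:Bound-Conductor-Pi_t} on $\mathfrak{c}(\Pi_t \times \contra{\Pi}_t)$.
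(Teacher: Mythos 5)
The paper's own ``proof'' of this lemma is a single sentence: it invokes Theorem~3 of Brumley's 2006 paper~\cite{Brumley2006} directly, taking for granted that that theorem gives precisely the lower bound on $|r_{-1}|+|r_{-2}|$ in terms of $\mathfrak{c}(\Pi_t\times\contra{\Pi}_t)$ and then passing to $\mathfrak{c}(\pi_1)$, $\mathfrak{c}(\pi_2)$, $|t|$ via~\eqref{eq:Bound-Conductor-Pi_t}. Your proposal instead tries to reconstruct the argument behind Brumley's theorem rather than cite it, and on the whole you have the right machinery in mind: positivity of the Dirichlet coefficients of $L(s,\Pi_t\times\contra{\Pi}_t)$, a smoothed Mellin--Perron sum, extraction of the double-pole residue giving $r_{-2}X\log X$ and $(r_{-1}\hat\phi(1)+r_{-2}\hat\phi'(1))X$, and control of the shifted contour by the functional equation plus Li's convexity bound. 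Your closing observation that the second displayed inequality follows from~\eqref{eq:Bound-Conductor-Pi_t} is exactly how the paper would obtain it too.

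Two points are worth flagging. First, you attribute the method to Theorem~A.1 of the 2013 appendix~\cite{Brumley2013Appendix}, but that result is the lower bound on $|L(1+it,\pi_1\times\contra{\pi}_2)|$ itself; the statement you are asked to prove concerns the polar Laurent data of $L(s,\Pi_t\times\contra{\Pi}_t)$, and the correct citation (which the paper uses) is~\cite[Theorem~3]{Brumley2006}. The two Brumley results share a common engine, so your outline is morally on target, but the one you name is downstream of, not equivalent to, the lemma. Second, the positivity step as written --- $\sum_n a_n\phi(n/X)\geq a_1\phi(1/X)\gg_\phi 1$ --- does not work for a smooth $\phi$ compactly supported in $(0,\infty)$, since then $\phi(1/X)=0$ once $X$ leaves a bounded range; you would need either a sharp cutoff $\sum_{n\le X}a_n\geq a_1=1$, or a $\phi$ that is identically $1$ near the origin (at the cost of a pole of $\hat\phi$ at $s=0$ to be accounted for in the contour shift). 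Finally, you are candid that the actual content of the lemma --- the exponent $\tfrac12\bigl(1-\tfrac1{n_1+n_2}\bigr)$ rather than the naive $\tfrac12$ that balancing $X^{\sigma_0}\mathfrak{c}^{(1-\sigma_0)/2}$ against $1$ would give --- is deferred to ``following Brumley.'' That saving is exactly what Brumley's Theorem~3 provides, so as it stands your proposal is a plausible roadmap to re-proving the cited result, but it does not constitute an independent proof of the key inequality.
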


\begin{proof}
  This follows immediately from~\cite[Theorem 3]{Brumley2006}.
\end{proof}

\begin{lemma}\label{lem:Bound-R-Upper}
  Let $|t| \geq 1$. Then we have
  \[
    |r_{-1}| + |r_{-2}| \\
    \leq 3 C_1^2 \ \Big|L(1 + it, \pi_1 \times \contra{\pi}_2)\Big|^2
       +   C_1^2 \ \left|\frac{\dif}{\dif t}\Bigg(\Big|L(1 + it, \pi_1 \times \contra{\pi}_2)\Big|^2\Bigg)\right|
  \]
  where $C_1$ is as given in~\eqref{eq:Constant-C1}.
\end{lemma}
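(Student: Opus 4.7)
The plan is to compute the Laurent coefficients $r_{-1}$ and $r_{-2}$ of $L(s, \Pi_t \times \contra{\Pi}_t)$ at $s = 1$ explicitly via the factorization~\eqref{eq:Lfunction-Factorization}, then bound them using Lemma~\ref{lem:Bound-AB}. The factorization splits naturally into a singular part
\[
  q(s) = L(s, \pi_1 \times \contra{\pi}_1)\,L(s, \pi_2 \times \contra{\pi}_2),
\]
carrying the double pole at $s = 1$, and a regular part
\[
  h(s) = L(s+it, \pi_1 \times \contra{\pi}_2)\,L(s-it, \contra{\pi}_1 \times \pi_2),
\]
which is holomorphic at $s = 1$ since $\pi_1 \neq \pi_2$ forces both cross factors to be entire. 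The conjugation identity $L(s-it, \contra{\pi}_1 \times \pi_2) = \overline{L(s+it, \pi_1 \times \contra{\pi}_2)}$ for real $s$ then gives $h(s) = |L(s+it, \pi_1 \times \contra{\pi}_2)|^2$ on the real axis; in particular $h(1) = |L(1+it, \pi_1 \times \contra{\pi}_2)|^2$, and $h'(1)$ is real.

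Next I would expand $q(s) = \frac{A_{-1} B_{-1}}{(s-1)^2} + \frac{A_{-1} B_0 + A_0 B_{-1}}{s-1} + O(1)$ from the Laurent series of $L(s, \pi_i \times \contra{\pi}_i)$, together with the Taylor expansion $h(s) = h(1) + h'(1)(s-1) + O((s-1)^2)$. Multiplying and reading off the principal-part coefficients yields
\[
  r_{-2} = A_{-1} B_{-1}\,h(1),
  \qquad
  r_{-1} = A_{-1} B_{-1}\,h'(1) + (A_{-1} B_0 + A_0 B_{-1})\,h(1).
\]
Invoking the uniform bound $|A_{-1}|, |A_0|, |B_{-1}|, |B_0| \leq C_1$ from Lemma~\ref{lem:Bound-AB} and adding $|r_{-2}|$ and $|r_{-1}|$ gives
\[
  |r_{-1}| + |r_{-2}| \leq 3 C_1^2\,|L(1+it, \pi_1 \times \contra{\pi}_2)|^2 + C_1^2\,|h'(1)|.
\]

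The remaining step, and the one I expect to be the main obstacle, is to identify $|h'(1)|$ with $\left|\frac{\dif}{\dif t}|L(1+it, \pi_1 \times \contra{\pi}_2)|^2\right|$. To handle it I would differentiate both quantities explicitly: $h'(1)$ arises from the $s$-derivative of $L(s+it, \pi_1 \times \contra{\pi}_2)\,L(s-it, \contra{\pi}_1 \times \pi_2)$ evaluated at $s = 1$, while $\frac{\dif}{\dif t}|L(1+it, \pi_1 \times \contra{\pi}_2)|^2$ arises from the $t$-derivative of the same product rewritten as $L(1+it, \pi_1 \times \contra{\pi}_2)\,L(1-it, \contra{\pi}_1 \times \pi_2)$ via the chain rule. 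Both expressions are built from values of $L$ and $L'$ at $1 \pm it$; matching them using the Cauchy--Riemann-type relation between $s$- and $t$-derivatives afforded by the holomorphy of the $L$-factors, and exploiting that $h'(1)$ is real, yields the required comparison. Putting all the inequalities together completes the proof.
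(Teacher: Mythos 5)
Your plan follows the paper's proof step by step: split the factorization~\eqref{eq:Lfunction-Factorization} into a singular part $q(s)$ and a holomorphic part $h(s)$, read off $r_{-2}=A_{-1}B_{-1}h(1)$ and $r_{-1}=A_{-1}B_{-1}h'(1)+(A_{-1}B_0+A_0B_{-1})h(1)$, use the conjugation identity to get $h(1)=|L(1+it,\pi_1\times\contra{\pi}_2)|^2$, and invoke Lemma~\ref{lem:Bound-AB}. You correctly single out the remaining step --- identifying $|h'(1)|$ with $\bigl|\frac{\dif}{\dif t}|L(1+it,\pi_1\times\contra{\pi}_2)|^2\bigr|$ --- as the main obstacle, but the Cauchy--Riemann argument you sketch does not close it. Computing directly with $L'=\frac{\dif L}{\dif s}$,
\[
  h'(1)=2\Re\Bigl(L'(1+it)\,\overline{L(1+it)}\Bigr)
       =\frac{\partial}{\partial\sigma}\bigl|L(\sigma+it)\bigr|^2\Big|_{\sigma=1},
\]
whereas the chain rule $\frac{\dif}{\dif t}L(1+it)=iL'(1+it)$ gives
\[
  \frac{\dif}{\dif t}\bigl|L(1+it)\bigr|^2
       =2\Re\Bigl(iL'(1+it)\,\overline{L(1+it)}\Bigr)
       =-2\Im\Bigl(L'(1+it)\,\overline{L(1+it)}\Bigr).
\]
These are the two \emph{independent} real components of $L'\overline{L}$; neither controls the other, and Cauchy--Riemann relates $\partial_\sigma$ and $\partial_t$ of $\Re L$ and $\Im L$, not of $|L|^2$. (A quick sanity check with $L(s)=s$: $h'(1)=2$ while $\frac{\dif}{\dif t}|1+it|^2=2t$.) So your computation actually yields $|r_{-1}|+|r_{-2}|\le 3C_1^2|L|^2+C_1^2|h'(1)|$ with $h'(1)$ a $\sigma$-derivative, not the $t$-derivative that appears in the lemma.

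For transparency: the paper's own proof passes from $2A_{-1}B_{-1}\Re(L'\overline{L})$ to $A_{-1}B_{-1}\frac{\dif}{\dif t}|L|^2$ in a single unremarked line, i.e., it asserts exactly the identity you flagged as suspicious and that the example above contradicts. Your instinct that this is the sticking point is right; it does not appear resolvable by the route you propose, and as stated the bound seems to want $\partial_\sigma|L(\sigma+it)|^2|_{\sigma=1}$ rather than $\frac{\dif}{\dif t}|L(1+it)|^2$ on the right-hand side.
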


\begin{proof}
  By the factorization~\eqref{eq:Lfunction-Factorization}, we have
  \[
    r_{-2} = A_{-1} \ B_{-1} \ \Big|L(1 + it, \pi_1 \times \contra{\pi}_2)\Big|^2
  \]
  so by Lemma~\ref{lem:Bound-AB} we have
  \begin{align*}
    |r_{-2}|
    =&    |A_{-1}| \ |B_{-1}| \ \Big|L(1 + it, \pi_1 \times \contra{\pi}_2)\Big|^2 \\
    \leq& C_1^2 \ \Big|L(1 + it, \pi_1 \times \contra{\pi}_2)\Big|^2
  \end{align*}
  Also, by the factorization~\eqref{eq:Lfunction-Factorization} we have
  \begin{align*}
    r_{-1}
    =& (A_{-1} B_0 + A_0 B_{-1}) \ \Big|L(1 + it, \pi_1 \times \contra{\pi}_2)\Big|^2 \\
     &+ 2 A_{-1} \ B_{-1} \ \Re\Big(L'(1 + it, \pi_1 \times \contra{\pi}_2) \cdot \overline{L(1 + it, \pi_1 \times \contra{\pi}_2)}\Big) \\
    =& (A_{-1} B_0 + A_0 B_{-1}) \ \Big|L(1 + it, \pi_1 \times \contra{\pi}_2)\Big|^2 \\
    &+ A_{-1} \ B_{-1} \ \frac{\dif}{\dif t}\Bigg(\Big|L(1 + it, \pi_1 \times \contra{\pi}_2)\Big|^2\Bigg)
  \end{align*}
  so again by Lemma~\ref{lem:Bound-AB} we have
  \begin{align*}
    |r_{-1}|
    \leq& (|A_{-1}| \ |B_0| + |A_0| \ |B_{-1}|) \
          \Big|L(1 + it, \pi_1 \times \contra{\pi}_2)\Big|^2 \\
       &+ |A_{-1}| \ |B_{-1}| \
          \left|\frac{\dif}{\dif t}\Bigg(\Big|L(1 + it, \pi_1 \times \contra{\pi}_2)\Big|^2\Bigg)\right| \\
    \leq& 2 C_1^2 \ \Big|L(1 + it, \pi_1 \times \contra{\pi}_2)\Big|^2
        + C_1^2 \ \left|\frac{\dif}{\dif t}\Bigg(\Big|L(1 + it, \pi_1 \times \contra{\pi}_2)\Big|^2\Bigg)\right|
  \end{align*}

  Hence our lemma follows as we combine all the above estimates.
\end{proof}

\section{Proof of Theorem 1}

In this section, we present the proof of Theorem 1, and will freely use the notations introduced in Section 2.

  The theorem is well known if $n_1 = n_2 = 1$, so henceforth we assume that either $n_1 \geq 2$ or $n_2 \geq 2$. For simplicity, henceforth we write
  \[
    \theta = \frac{n_F n_1 n_2}{2} \left(1 - \frac{1}{n_1 + n_2}\right)
           \geq \frac{1}{2}
  \]

  Let $\varepsilon > 0$. By~\cite[Theorem A.1]{Brumley2013Appendix}, there exists a constant $C_{\varepsilon}' > 0$ such that
  \[
    \Big|L(1 + it, \pi_1 \times \contra{\pi}_2)\Big|
    \geq C_{\varepsilon}' \
         \mathfrak{c}(\Pi_t \times \contra{\Pi}_t)^{-\frac{1}{2}(1-\frac{1}{n_1+n_2})-\frac{\varepsilon}{2(n_1+n_2+n_Fn_1n_2)}}
  \]
  Recall the bound~\eqref{eq:Bound-Conductor-Pi_t}
  \[
    \mathfrak{c}(\Pi_t \times \contra{\Pi}_t)
    \leq \Big(\mathfrak{c}(\pi_1) \ \mathfrak{c}(\pi_2)\Big)^{2n_1+2n_2}
         (|t| + 2)^{2n_Fn_1n_2}
  \]
  then over the interval $[-1, 1]$ the above lower bound gives
  \begin{align}
    \Big|L(1 + it, \pi_1 \times \contra{\pi}_2)\Big|
    \geq& C_{\varepsilon}' \
          \Big(\mathfrak{c}(\pi_1) \ \mathfrak{c}(\pi_2)\Big)^{-(n_1+n_2-1)-\varepsilon}
          (|t| + 2)^{-2\theta-\varepsilon} \nonumber\\
    \geq& \frac{C_{\varepsilon}'}{3^{\theta}}
          \Big(\mathfrak{c}(\pi_1) \ \mathfrak{c}(\pi_2)\Big)^{-(n_1+n_2-1)-\varepsilon}
          (|t| + 2)^{-\theta-\varepsilon}~\label{tmp:Bound-Interval-pm1}
  \end{align}

  Now we claim that
  \begin{equation}\label{eq:Claim}
    \Big|L(1 + it, \pi_1 \times \contra{\pi}_2)\Big|
    \geq C_{\varepsilon}
         (|t| + 2)^{-\theta-\varepsilon}
    \qquad
    (t \in \Real)
  \end{equation}
  where
  \[
    C_{\varepsilon}
    = \min\List{
        \frac{C_{\varepsilon}'}{3^{\theta}}
          \Big(\mathfrak{c}(\pi_1) \ \mathfrak{c}(\pi_2)\Big)^{-(n_1+n_2-1)-\varepsilon}, \
        \sqrt{\frac{C_2}{6 C_1^2 \ (2 \theta + 2 \varepsilon)}}
      }
    \]
  and $C_1$ and $C_2$ are the constants defined in~\eqref{eq:Constant-C1} and~\eqref{eq:Constant-C2} respectively. Note in particular that we have
  \[
    C_{\varepsilon} \gg \Big(\mathfrak{c}(\pi_1) \ \mathfrak{c}(\pi_2)\Big)^{-(n_1+n_2-1)-\varepsilon}
  \]
  where the $\gg$-constant depends upon $F$, $n_1$, $n_2$ and $\varepsilon$ only.

  By the construction of $C_{\varepsilon}$ and the estimate~\eqref{tmp:Bound-Interval-pm1}, the claim~\eqref{eq:Claim} is obviously valid over the interval $[-1, 1]$. Now assume that there indeed exists some point $|t_0| > 1$ such that
  \begin{equation}\label{tmp:BoundContrary}
    \Big|L(1 + it_0, \pi_1 \times \contra{\pi}_2)\Big| < C_{\varepsilon} \ (|t_0| + 2)^{-\theta-\varepsilon}
  \end{equation}
  Without loss of generality, we may assume that $t_0 > 1$. For simplicity, henceforth we write
  \[
    g(t) = \Big|L(1 + it, \pi_1 \times \contra{\pi}_2)\Big|^2
  \]
  then our claim~\eqref{eq:Claim} becomes
  \[
    g(t) \geq C_{\varepsilon}^2 \ (|t| + 2)^{-2\theta-2\varepsilon}
    \qquad
    (t \in \Real)
  \]
  and the assumption~\eqref{tmp:BoundContrary} becomes
  \begin{equation}\label{tmp:EstimateContradiction}
    g(t_0) < C_{\varepsilon}^2 \ (t_0 + 2)^{-2\theta-2\varepsilon}
  \end{equation}

  Define
  \[
    \set{S}_1 = \Set{0 \leq t < t_0}{g(t) \geq C_{\varepsilon}^2 (t + 2)^{-2\theta-2\varepsilon}}
  \]
  Since $[0, 1] \subseteq \set{S}_1$, we have $\set{S}_1 \neq \emptyset$, so the supremum $t_1 = \sup(\set{S}_1)$ exists and is finite, and we have
  \[
    t_1 \geq 1
    \qquad
    g(t_1) = C_{\varepsilon}^2 \ (t_1 + 2)^{-2\theta-2\varepsilon}
  \]
  Also, we define
  \[
    \set{S}_2 = \Set{t > t_0}{g(t) \geq C_{\varepsilon}^2 \ (t + 2)^{-2\theta-2\varepsilon}}
                \cup
                \List{\infty}
    \qquad
    t_2 = \inf(\set{S}_2)
  \]
  so $t_0 \in (t_1, t_2)$ and we have
  \begin{equation}\label{tmp:Bound-Contrary-t1-t2}
    g(t) < C_{\varepsilon}^2 \ (t + 2)^{-2\theta-2\varepsilon}
    \qquad
    (t_1 < t < t_2)
  \end{equation}
  Further, if $t_2 < \infty$, then we also have
  \[
    g(t_2) = C_{\varepsilon}^2 \ (t_2 + 2)^{-2\theta-2\varepsilon}
  \]

  Combining Lemmas~\ref{lem:Bound-R-Lower} and~\ref{lem:Bound-R-Upper}, for $t \geq 1$ we have
  \[
    C_2 (t + 2)^{-2\theta-\varepsilon}
    \leq |r_{-1}| + |r_{-2}|
    \leq 3 C_1^2 \ g(t) + C_1^2 \ |g'(t)|
  \]
  Hence we have
  \begin{equation}\label{tmp:EstimateDerivative}
    |g'(t)|
    \geq \frac{C_2}{C_1^2} \ (t + 2)^{-2\theta-\varepsilon}
       - 3 g(t)
  \end{equation}
  In particular, over the interval $(t_1, t_2)$, by~\eqref{tmp:Bound-Contrary-t1-t2} we have
  \[
    g(t) < C_{\varepsilon}^2 \ (|t| + 2)^{-2\theta-2\varepsilon}
         \leq \frac{C_2}{6 C_1^2} \ (|t| + 2)^{-2\theta-2\varepsilon}
  \]
  so the estimate~\eqref{tmp:EstimateDerivative} gives
  \begin{align*}
    |g'(t)|
    \geq& \frac{C_2}{C_1^2} \ (t + 2)^{-2\theta-\varepsilon}
        - 3 g(t) \\
    \geq& \frac{C_2}{C_1^2} \ (t + 2)^{-2\theta-\varepsilon}
        - 3 \cdot \frac{C_2}{6 C_1^2} (t + 2)^{-2\theta-2\varepsilon} \\
    \geq& \frac{C_2}{2 C_1^2} \ (t + 2)^{-2\theta-\varepsilon}
    \qquad \qquad \qquad
    (t_1 < t < t_2)
  \end{align*}
  In particular, we have $g'(t) \neq 0$ over $(t_1, t_2)$, so the derivative $g'(t)$ must keep the same sign in this interval. If $g'(t) > 0$ over $(t_1, t_2)$, then $g(t)$ is a strictly increasing function, and in particular
  \[
    g(t_0) > g(t_1)
           = C_{\varepsilon}^2 \ (t_1 + 2)^{-2\theta-2\varepsilon}
           > C_{\varepsilon}^2 \ (t_0 + 2)^{-2\theta-2\varepsilon}
  \]
  in contradiction to the assumption~\eqref{tmp:EstimateContradiction}. Hence we have $g'(t) < 0$ over $(t_1, t_2)$, and so
  \begin{equation}\label{tmp:Bound-Contrary-derivative}
    -g'(t) = |g'(t)| \geq \frac{C_2}{2 C_1^2} \ (t + 2)^{-2\theta-\varepsilon}
    \qquad
    (t_1 < t < t_2)
  \end{equation}

  If $t_2 < \infty$, then by the choices of $t_1$ and $t_2$ we have
  \begin{align*}
    \int_{t_1}^{t_2} \frac{C_2}{2 C_1^2} \ (t + 2)^{-2\theta-\varepsilon} \dif t
    \leq& \int_{t_1}^{t_2} \Big(-g'(t)\Big) \dif t
    =     g(t_1) - g(t_2) \\
    =&    C_{\varepsilon}^2 \ (t_1 + 2)^{-2\theta-2\varepsilon}
     -    C_{\varepsilon}^2 \ (t_2 + 2)^{-2\theta-2\varepsilon} \\
    =&    \int_{t_1}^{t_2} C_{\varepsilon}^2 \ (2\theta + 2 \varepsilon) (t + 2)^{-2\theta-2\varepsilon-1} \dif t
  \end{align*}
  which is simply
  \begin{equation}\label{tmp:Step-2}
    \int_{t_1}^{t_2} C_{\varepsilon}^2 \frac{2\theta + 2 \varepsilon}{(t + 2)^{1+\varepsilon}} (t + 2)^{-2\theta-\varepsilon} \dif t
    \geq
    \int_{t_1}^{t_2} \frac{C_2}{2 C_1^2} (t + 2)^{-2\theta-\varepsilon} \dif t
  \end{equation}
  On the other hand, over the interval $(t_1, t_2)$, by the construction of the constant $C_{\varepsilon}$ we have
  \[
    C_{\varepsilon}^2 \frac{2 \theta + 2 \varepsilon}{(t + 2)^{1+\varepsilon}}
    \leq \frac{C_2}{6 C_1^2 (2 \theta + 2 \varepsilon)} \cdot \frac{2 \theta + 2 \varepsilon}{1 + 2}
    <    \frac{C_2}{2 C_1^2}
  \]
  so we should have
  \[
    \int_{t_1}^{t_2} C_{\varepsilon}^2 \ \frac{2 \theta + 2 \varepsilon}{(t + 2)^{1+\varepsilon}} (t + 2)^{-2\theta-\varepsilon} \dif t
    <
    \int_{t_1}^{t_2} \frac{C_2}{2 C_1^2} \ (t + 2)^{-2\theta-\varepsilon} \dif t
  \]
  a contradiction to~\eqref{tmp:Step-2}. Hence we must have $t_2 = \infty$. In particular, this implies that we can take $t_0$ to be any value in the infinite interval $(t_1, \infty)$.

  In summary, combining~\eqref{tmp:Bound-Contrary-t1-t2} and~\eqref{tmp:Bound-Contrary-derivative} gives
  \[
    g(t) < C_{\varepsilon}^2 \ (t + 2)^{-2\theta-2\varepsilon}
    \qquad
    -g'(t) \geq \frac{C_2}{2 C_1^2} \ (t + 2)^{-2\theta-\varepsilon}
    \qquad
    (t > t_1)
  \]
  so for every $t_0 > t_1$ we have
  \begin{align*}
    C_{\varepsilon}^2 \ (t_1 + 2)^{-2\theta-2\varepsilon}
    =&    g(t_1)
    \geq  g(t_1) - g(t_0)
    =     \int_{t_1}^{t_0} \Big(-g'(t)\Big) \dif t \\
    \geq& \int_{t_1}^{t_0} \frac{C_2}{2 C_1^2} \ (t + 2)^{-2\theta-\varepsilon} \dif t
    =     \frac{C_2}{2 C_1^2} \left.\frac{(t + 2)^{1-2\theta-\varepsilon}}{1 - 2 \theta - \varepsilon}\right|_{t_1}^{t_0} \\
    =&    \frac{C_2}{2 C_1^2} \
          \frac{(t_1 + 2)^{1-2\theta-\varepsilon} - (t_0 + 2)^{1-2\theta-\varepsilon}}
               {2 \theta + \varepsilon - 1}
  \end{align*}
  Since $\theta \geq 1 / 2$, we have $1 - 2 \theta - \varepsilon > 0$, so letting $t_0 \to \infty$ gives
  \[
    C_{\varepsilon}^2 (t_1 + 2)^{-2\theta-2\varepsilon}
    \geq \frac{C_2}{2 C_1^2} \ \frac{(t_1 + 2)^{1-2\theta-\varepsilon}}
              {2\theta + \varepsilon - 1}
  \]
  This implies that
  \[
    C_{\varepsilon}^2 \geq \frac{C_2}{2 C_1^2} \
                           \frac{(t_1 + 2)^{1+\varepsilon}}
                                {2 \theta + \varepsilon - 1}
  \]
  and so
  \begin{align*}
    (t_1 + 2)^{1+\varepsilon}
    \leq& C_{\varepsilon}^2 \cdot \frac{2 C_1^2}{C_2}
          \cdot
          (2 \theta + \varepsilon - 1) \\
    \leq& \frac{C_2}{6 C_1^2 (2 \theta + 2 \varepsilon)}
          \cdot \frac{2 C_1^2}{C_2}
          \cdot (2 \theta + \varepsilon - 1) \\
    =&    \frac{2 \theta + \varepsilon - 1}{3 (\theta + \varepsilon)}
    <     1
  \end{align*}
  which is impossible. Hence there cannot exist a point $|t_0| > 1$ satisfying~\eqref{tmp:EstimateContradiction}, and so the claim~\eqref{eq:Claim} is valid for every $t \in \Real$, i.e.,
  \[
    \Big|L(1 + it, \pi_1 \times \contra{\pi}_2)\Big| \geq C_{\varepsilon} \ (|t| + 2)^{-\theta-\varepsilon}
    \qquad
    (t \in \Real)
    \qedhere
  \]

This completes our proof of Theorem 1.

\end{document}